\definecolor{lightgray}{rgb}{0.8, 0.8, 0.8}
\definecolor{darkgray}{rgb}{0.7, 0.7, 0.7}
\definecolor{darkblue}{rgb}{0, 0, .4}
\newtheorem{theorem}{Theorem}[section]
\newtheorem{proposition}[theorem]{Proposition}
\newtheorem{lemma}[theorem]{Lemma}
\newtheorem{definition}[theorem]{Definition}
\newtheorem{corollary}[theorem]{Corollary}
\newtheorem{remark}[theorem]{Remark}
\newtheoremstyle{example}{\topsep}{\topsep}%
     {}
     {}
     {\bfseries}
     {.}
     {.5em}
     {\thmname{#1}\thmnumber{ #2}}
\theoremstyle{example}
\newtheorem{example}[theorem]{Example}
\newtheoremstyle{negexample}{\topsep}{\topsep}%
     {}
     {}
     {\bfseries}
     {.}
     {.5em}
     {\thmname{#1}\thmnumber{ #2}}
\theoremstyle{negexample}
\newcounter{todocounter}
\long\def\symbolfootnote[#1]#2{\begingroup%
\def\thefootnote{\fnsymbol{footnote}}\footnote[#1]{#2}\endgroup}
\newcommand{\Rm}[1]{\expandafter\@slowromancap\romannumeral #1@}
\newfont{\footsc}{cmcsc10 at 8truept}
\newfont{\footbf}{cmbx10 at 8truept}
\newfont{\footrm}{cmr10 at 10truept}
\renewenvironment{abstract}%
                {
                  \begin{list}{}%
                     {\setlength{\rightmargin}{1in}%
                      \setlength{\leftmargin}{1in}}%
                   \item[]\ignorespaces\begin{small}}%
                 {\end{small}\unskip\end{list}}
\keywords{involution, permutation pattern, standard young tableaux}
\title{\sc{An involution on involutions and a Generalization of Layered Permutations}}
\author{Mikl\'os B\'ona
\\[-0.25ex]
\small Department of Mathematics\\[-0.5ex]
\small University of Florida\\[-0.5ex]
\small Gainesville, Florida\\[15pt]
Rebecca Smith
\\[-0.25ex]
\small Department of Mathematics\\[-0.5ex]
\small SUNY Brockport\\[-0.5ex]
\small Brockport, New York\\[-1.5ex]
}
\date{}
\begin{document}
\maketitle

\newcommand{\s}{\mathbf{s}}
\newcommand{\m}{\mathbf{m}}
\renewcommand{\t}{\mathbf{t}}
\renewcommand{\b}{\mathbf{b}}
\newcommand{\f}{\mathbf{f}}
\newcommand{\rev}{\operatorname{rev}}
\newcommand{\dual}{\operatorname{dual}}
\newcommand{\D}{\mathcal{D}}
\newcommand{\Av}{\operatorname{Av}}

\newcommand{\inp}{\textsf{i}}
\newcommand{\tra}{\textsf{t}}
\newcommand{\out}{\textsf{o}}

\newcommand{\R}{\stackrel{R}{\sim}}
\renewcommand{\L}{\stackrel{L}{\sim}}
\newcommand{\notR}{\stackrel{R}{\not\sim}}
\newcommand{\notL}{\stackrel{L}{\not\sim}}

\newcommand{\OEISlink}[1]{\href{http://oeis.org/#1}{#1}}
\newcommand{\OEISref}{\href{http://oeis.org/}{OEIS}~\cite{sloane:the-on-line-enc:}}
\newcommand{\OEIS}[1]{(Sequence \OEISlink{#1} in the \OEISref.)}

%
%
%
%
%
%
%
%

\def\sdwys #1{\xHyphenate#1$\wholeString}
\def\xHyphenate#1#2\wholeString {\if#1$%
\else\say{\ensuremath{#1}}\hspace{2pt}%
\takeTheRest#2\ofTheString
\fi}
\def\takeTheRest#1\ofTheString\fi
{\fi \xHyphenate#1\wholeString}
\def\say#1{\begin{turn}{-90}\ensuremath{#1}\end{turn}}

\begin{abstract} Taking transposes of Standard Young Tableaux defines a natural involution on the set $I(n)$ of involutions
of length $n$ via the
the Robinson-Schensted correspondence. In some cases, this involution can be defined without resorting to the
Robinson-Schensted correspondence. As a byproduct, we get an interesting generalization of layered permutations.
\end{abstract}

\section{Introduction}
The Robinson-Schensted (RS)~\cite{robinson:on-representations:,schensted:longest-increas:} correspondence bijectively maps a permutation $p$ into an ordered  pair of Standard Young Tableaux (SYT)
$(P(p), Q(p))$ on $n$ boxes and of the same shape. This bijection, and its consequences, have been analyzed from numerous perspectives in the last 50 years, see \cite{sagan} for a comprehensive treatment of these results. 

In particular, a famous result of Marcel-Paul Sch\"utzenberger \cite{schutz} shows that  if $p^{-1}$ denotes the inverse of $p$, then 
the RS correspondence maps $p^{-1}$ to the pair $(Q(p),P(p))$. Therefore, if $p$ is an involution, that is, 
if $p=p^{-1}$, then $(P(p),Q(p))=(Q(p),P(p))$, which implies that $P(p)=Q(p)$. In other words, the RS correspondence
defines a bijection from the set $I(n)$ of involutions of length $n$ to the set $SYT(n)$ of Standard Young Tableaux on
$n$ boxes.  This bijection has been studied for its own sake in \cite{beissinger}.  Therefore, with a slight abuse of language,
for involutions $p$, we will sometimes simply talk about the {\em tableau of $p$} when we mean the tableau $P(p)=Q(p)$.

Taking transposes defines a natural involution on $SYT(n)$. Therefore, as $I(n)$ is in bijection with $SYT(n)$, taking
transposes of the corresponding Standard Young Tableaux also defines an involution $f$ on $I(n)$. 
Interestingly, it seems that $f$ has not been the subject of many research papers.

Note that it is  well known  \cite{sagan} that if $p$ is any permutation and $p^{rev}$ is its reverse, 
then $P(p^{rev}) = P(p)^T$, where $P(p)^T$ is the transpose of the tableau $P(p)$. This implies that in the special
case when $p$ is an involution {\em and} $p^{rev}$ is also an involution, then $f(p)=p^{rev}$. However, 
if $p$ is an involution and $p^{rev}$ is not an involution, then there is no clear way of describing $f(p)$ without
using the machinery of the RS bijection. 

In this paper, our goal
is to describe the effect of $f$ on some subsets of $I(n)$ in terms of the involutions themselves only, that is,  {\em without resorting to the RS correspondence.}  In Section \ref{seclayered}, we give a description of $f$ for layered permutations, which are necessarily involutions, while in Section \ref{catalan}, we consider involutions that do not contain an increasing subsequence of length three, or a
decreasing subsequence of length three. Our results will be one-sided in that we do have a simple description of 
 $f(p)$ when $p$ is in a certain set $S$ of involutions, but we do not have a similarly simple description for $f(w)$ if 
$w\in f(S)$. This is, perhaps, not surprising, since the roles of rows and columns of SYT in  the RS correspondence are fundamentally different.  In Section \ref{generalized}, we use a new characterization of layered 
permutations that we prove in Section \ref{seclayered} to generalize these permutations in a natural way. 

\section{Layered permutations} \label{seclayered}
\begin{definition} A permutation $p=p_1p_2\cdots p_n$ is called {\em layered} if $p$ is a concatenation of decreasing
subsequences (the layers) so that for all $i$, each entry of the $i$th decreasing subsequence from the left
is smaller than each entry of the $(i+1)$st decreasing subsequence from the left. 
\end{definition}

\begin{example} Permutations 32154, 2154376, and 1234 are all layered. \end{example}
\begin{remark} \label{linvol} All layered permutations are {\em involutions}. \end{remark} 

There are many ways in which layered permutations can be characterized. For instance, they are precisely the 
permutations that avoid both 231 and 132 as patterns, and they are precisely the involutions whose patterns are
all involutions. In fact, all the patterns of layered patterns are themselves layered.  See Chapter 4 of \cite{combperm} for the relevant definitions in pattern avoidance. Layered permutations
have also been studied \cite{dan,dan1} from the perspective of their packing densities.  Another characterization of layered permutations will become important for us shortly.

As layered permutations of length $n$ are clearly in bijection with compositions of $n$, their number is $2^{n-1}$. So
if $L_n$ denotes the set of these permutations, $f(L_n)$ is a $2^{n-1}$-element subset of the symmetric group $S_n$. How can we describe this subset?

\begin{lemma} \label{layered}
Let $T$ be a SYT on $n$ boxes. Then $T$ is the tableau of a layered permutation $p$ if and only if $T$ satisfies the
following requirements. 

For all $i\in \{1,2,3,\ldots,n-1\}$, the entry $i+1$ is either
\begin{itemize} \item in the row directly below the row containing $i$, or
\item in the top row. 
\end{itemize}
\end{lemma}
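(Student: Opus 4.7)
The plan is to prove the forward direction by tracing RSK on a layered permutation, then deduce the reverse direction by a counting argument.

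\textbf{Forward direction.} I write the layered permutation $p$ as $p = L_1 L_2 \cdots L_k$, where $L_j$ is a decreasing block of $\ell_j$ entries whose values lie strictly below those of $L_{j+1}$. I claim that during the RSK insertion of $p$, the $i$-th entry of $L_j$ creates a new cell in row $i$ of the tableau. The verification is a ``snake'' bumping argument by induction on $i$: after the first $i-1$ entries $a_1 > a_2 > \cdots > a_{i-1}$ of $L_j$ have been inserted, the last cells of rows $1, 2, \ldots, i-1$ contain $a_{i-1}, a_{i-2}, \ldots, a_1$ respectively, while everything else in those rows came from earlier, smaller layers. Inserting $a_i$ then bumps $a_{i-1}$ from row $1$ into row $2$, which bumps $a_{i-2}$ into row $3$, and so on, eventually depositing $a_1$ at the end of row $i$ (where it fits since prior row-$i$ entries are smaller). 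So if $r_m$ denotes the row of $m$ in the tableau $P(p) = Q(p)$, the row-sequence $r_1, r_2, \ldots, r_n$ is exactly the concatenation $(1, 2, \ldots, \ell_1)(1, 2, \ldots, \ell_2) \cdots (1, 2, \ldots, \ell_k)$; in particular, $r_{m+1} \in \{1, r_m + 1\}$ for each $m$, which is precisely the stated condition.

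\textbf{Reverse direction via counting.} Layered permutations of length $n$ are in bijection with compositions of $n$, so there are $2^{n-1}$ of them. I will show that there are also exactly $2^{n-1}$ SYT on $n$ boxes satisfying the lemma's condition, whence the forward-direction injection from layered permutations into such SYT is forced to be a bijection. Any such SYT produces a row-sequence $r_1, \ldots, r_n$ with $r_1 = 1$ and $r_{i+1} \in \{1, r_i + 1\}$, giving at most $2^{n-1}$ possibilities. Conversely, from any such sequence one recovers a unique candidate SYT by placing $i+1$ in the leftmost empty cell of row $r_{i+1}$; to see this always yields a valid Young diagram, group the sequence into its maximal runs $(1, 2, \ldots, \ell_1), (1, 2, \ldots, \ell_2), \ldots$ and observe that a run of length $\ell$ adds one cell to each of rows $1, \ldots, \ell$ in order, preserving the partition shape (after adding to rows $1, \ldots, r-1$, row $r-1$ has length $\lambda_{r-1} + 1 \geq \lambda_r + 1$, so the next cell in row $r$ has a cell directly above it). This gives the desired count.

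The main obstacle is formulating the ``snake bumping'' step cleanly in the forward direction; once that is done, the counting argument closes the iff without needing an explicit geometric inverse-RSK description. The fact that the condition constrains only row numbers (and not specific columns) is consistent with the picture above: within a single run, the newly added cells lie in consecutive rows but are not forced to lie in the same column.
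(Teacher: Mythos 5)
Your proof is correct and follows essentially the same route as the paper: the forward direction traces the RSK insertion of each decreasing layer (the paper phrases this as an induction on the number of layers, you as an explicit bumping trace), and the reverse direction is the same counting argument, showing there are exactly $2^{n-1}$ tableaux satisfying the condition so that the injection from layered permutations must be a bijection. The only cosmetic difference is that you count via row-sequences while the paper counts by induction on $n$ (each valid tableau on $n-1$ boxes extends in exactly two ways).
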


\begin{proof}
First we prove the "only if" part,
  by induction on the number of layers in $p$, the case of one layer being obvious.  Let $p$ be a layered permutation with 
$\ell$ layers. As the entries of the last layer are larger than all preceding entries of $p$, their insertion does not displace
any of these preceding entries of $p$ from their place in the $P$-tableau of $p$, so they still satisfy the conditions of the 
lemma.  
Consider the last layer of $p$. If that layer consist of the entry $n$ only, then that entry $n$ will be inserted at the
end of the first row of the $P$-tableau of $p$, and we are done. If that last layer consists of the decreasing
subsequence $n(n-1)\cdots (n-a)$, then each of these $a+1$ entries will be inserted at the end of the first row of  the 
$P$-tableau of $p$, and the subsequently bumped one row lower by each entry that follows.  So indeed, the entries of the last layer will be positioned as
described in the first condition of the Lemma, and our induction proof is complete. 

In order to prove the "if" part, we can again argue by induction, or we can note that the number of SYT satisfying the
conditions of the Lemma is $2^{n-1}$, since any SYT on $n-1$ boxes that has the required properties can be completed
to a SYT on $n$ boxes having those properties in two different ways, either by placing $n$ at the end of the first row, or
by placing $n$ at the end of the row right below the row that contains $n-1$. Note that since no smaller entry can be directly below $n-1$, there is room in this lower row to place $n$.  So, the injective map from layered permutations with the given property must be surjective, hence it is bijective. 
\end{proof}

If a SYT satisfies the conditions of Lemma \ref{layered}, in other words, when it is the tableau of a layered permutation, 
we will call that tableau {\em layered}. 

Note in particular that the entries of any one layer of $p$ are in all distinct (and consecutive) rows of $P(p)=Q(p)$, 
the smallest entry of the layer in the first row, the second smallest entry of the layer in the second row, and so on.

\begin{example} \label{exalayered}
The tableau of the layered permutation $p =215439876$ is shown below 
\[
\ytableausetup{centertableaux}
\begin{ytableau}
1 & 3& 6 \\
2 & 4 & 7\\
5 & 8\\
9
\end{ytableau}.\]
\end{example}

Taking the transpose of a layered tableau $U$ will have an obvious effect on the properties proved in 
Lemma \ref{layered}. This motivates the following definition. 

\begin{definition} If $U$ is a SYT in which for all $i\leq n-1$, the entry $i+1$ is
\begin{itemize}
\item in the first column, or 
\item in the column immediately on the right of the column that contains $i$,
\end{itemize} then we say that $U$ satisfies the {\em transposed layer} conditions.

If the tableau of the involution $p$ satisfies the transposed layer condition, then we will say that $p$ satisfies
the transposed layer condition. 
\end{definition}

For instance,  taking the transpose of the tableau of Example \ref{exalayered}, we get the tableau

\[
\ytableausetup{centertableaux}
\begin{ytableau}
1 & 2 & 5  & 9\\
3&  4 & 8\\
6 & 7\\
\end{ytableau}.\]

Applying the inverse of the RS correspondence to the SYT above, we get that $f(p)=673481259$.
We notice that all three layers of $p$ are reversed in $f(p)$, but the entries of each layer of $p$ are no longer in 
consecutive positions in $f(p)$. We will explain that this is not by accident, and discuss what part of the structure of a
layered permutation is preserved by our involution $f$.

In a permutation $p=p_1p_2\cdots p_n$, we say that $i$ is a {\em descent} if $p_i>p_{i+1}$. In a Standard Young Tableaux $T$, we say that
$i$ is a {\em descent} if $i+1$ occurs in a row that is strictly below the row containing $i$. For instance, in the last displayed tableau, the entries 2 and 5 are descents. 

The following well-known fact is easy to prove. (See \cite{combperm}, Theorem 7.15 for a proof.)

\begin{proposition} \label{descent}
For any permutation $p$, the position $i$ is a descent in $p$ if and only if the entry $i$ is a descent of $Q(p)$. 
\end{proposition}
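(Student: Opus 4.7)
The plan is to prove this by analyzing the row insertion procedure underlying the RS correspondence, and to deduce the proposition from the standard \emph{row bumping lemma}. Let $P^{(k)}$ denote the insertion tableau obtained after row-inserting $p_1,p_2,\dots,p_k$, and let $(r_k,c_k)$ be the unique cell added when passing from $P^{(k-1)}$ to $P^{(k)}$. By the very definition of the recording tableau, the entry $k$ of $Q(p)$ occupies the cell $(r_k,c_k)$. Consequently, $i$ is a descent of $Q(p)$ (in the sense of the row containing $i+1$ being strictly below the row containing $i$) if and only if $r_{i+1}>r_i$.

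The main technical input I would invoke is the row bumping lemma: if one row-inserts two entries $x$ and then $y$ into the same tableau $T$, producing bumping paths $\mathcal{P}_x$ and $\mathcal{P}_y$ that terminate by adding new boxes $b_x$ and $b_y$ respectively, then
\begin{itemize}
\item if $x<y$, the path $\mathcal{P}_y$ lies strictly to the right of $\mathcal{P}_x$ in every row in which both are defined, and $b_y$ lies weakly above and strictly right of $b_x$;
\item if $x\ge y$, the path $\mathcal{P}_y$ lies weakly to the left of $\mathcal{P}_x$, and $b_y$ lies strictly below and weakly left of $b_x$.
\end{itemize}
This lemma is proved by an induction on the row index, comparing the entries bumped at each step from each path; it is the main obstacle in the proof, but it is exactly the content of the standard development of RSK (as in Sagan's book), so I would simply state it and refer to that source.

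With the lemma in hand, I apply it to $x=p_i$ and $y=p_{i+1}$ inserted successively into $P^{(i-1)}$, noting that the terminal boxes are precisely $(r_i,c_i)$ and $(r_{i+1},c_{i+1})$. If position $i$ is not a descent of $p$, i.e.\ $p_i<p_{i+1}$, the first case yields $r_{i+1}\le r_i$, so $i+1$ is not strictly below $i$ in $Q(p)$ and $i$ is not a descent of $Q(p)$. If position $i$ is a descent of $p$, i.e.\ $p_i>p_{i+1}$, the second case gives $r_{i+1}>r_i$, so $i$ is a descent of $Q(p)$. This settles both directions simultaneously and completes the proof.
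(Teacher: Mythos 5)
Your proof is correct: applying the row bumping lemma to the successive insertions of $p_i$ and $p_{i+1}$, together with the fact that $Q(p)$ records the new box created at each step, yields both directions of the equivalence in one case analysis, and your statement of the lemma (new box weakly above and strictly right when $p_i<p_{i+1}$, strictly below and weakly left when $p_i>p_{i+1}$) is the standard, correct one. The paper gives no proof of its own for Proposition \ref{descent}, deferring instead to Theorem 7.15 of \cite{combperm}; the argument there is essentially the one you reproduce, so your proposal matches the intended (cited) approach.
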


\begin{remark}  \label{inverse} Note that the two equivalent statements of Proposition \ref{descent} are also equivalent to the statement
that the entry $i+1$ precedes the entry $i$ in $p^{-1}$. In particular, if  $p=p_1p_2\cdots p_n$ is an involution, then this just means that
$i+1$ precedes $i$ in $p$. \end{remark}

Let $p$ be any permutation. We define a {\em jog} in $p$ as a maximal increasing subsequence of consecutive integers. For instance, $p=3614725$ has jogs 12, 345, and 67. Each permutation decomposes into its jogs. 
Note that if $p=p_1p_2\cdots p_n$ is an involution, then the fact that the entries $j, j+1,\cdots, j+a$
form a jog is equivalent to the fact that $p_jp_{j+1}\cdots p_{j+a}$ is an increasing subsequence of consecutive 
entries (position-wise) in $p$ that cannot be extended on either side. This is often described by saying that $p_jp_{j+1}\cdots p_{j+a}$ is a {\em run} or an {\em ascending run} in $p$.

\begin{corollary} 
\label{easyfact} If $w$ is a layered permutation, and the entries $a, a+1, \cdots ,a+b$ form a layer in $w$, then 
the entries $a,a+1,\cdots ,a+b$ form a jog in $f(w)$. 
\end{corollary}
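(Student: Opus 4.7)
The plan is to read off the jog structure of $f(w)$ from the descents of $T^T = Q(f(w))$ via Proposition~\ref{descent}. By Lemma~\ref{layered} and the remark following it, the layer $a, a+1, \ldots, a+b$ of $w$ occupies rows $1, 2, \ldots, b+1$ of $T = P(w) = Q(w)$. So within this layer, the entries $a, a+1, \ldots, a+b-1$ are descents of $T$, while the entries $a-1$ and $a+b$ (whenever they lie in $\{1, \ldots, n-1\}$) are not descents of $T$: in each case the successor starts a new layer and is placed in row $1$.

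The key auxiliary observation I would establish is that, for any SYT on $n$ boxes, the descents of $T$ and those of $T^T$ partition $\{1, 2, \ldots, n-1\}$. This reduces to a short case analysis showing that $i+1$ is either strictly below or strictly to the right of $i$ in $T$, but neither both nor neither, using the Young-shape together with the row/column strict-increase property of a SYT. Combining this with Proposition~\ref{descent} applied to the involution $f(w)$, whose $Q$-tableau is $T^T$, yields the translation rule: the position $i$ is a descent of $f(w)$ if and only if the entry $i$ is \emph{not} a descent of $T$.

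Feeding the first paragraph into this translation rule, the positions $a, a+1, \ldots, a+b-1$ are non-descents of $f(w)$, while the positions $a-1$ and $a+b$ (where they exist) are descents. Therefore $f(w)_a < f(w)_{a+1} < \cdots < f(w)_{a+b}$ is a \emph{maximal} ascending run of $f(w)$, and the equivalence for involutions stated immediately before the corollary identifies this maximal run with a jog on the values $a, a+1, \ldots, a+b$ in $f(w)$, which is exactly the assertion.

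The main obstacle is the descent-complementarity claim for $T$ versus $T^T$: although elementary, it requires careful bookkeeping across the possible relative positions of $i$ and $i+1$. Once that is in place the rest is a direct translation; edge cases when $a=1$ or $a+b=n$ are handled by the convention that the missing descent at position $0$ or $n$ need not be checked.
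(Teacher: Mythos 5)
Your proposal is correct and takes essentially the same route as the paper's proof: translate the layer into the descent set of $Q(w)$, use the complementarity of descents under transposition, and then read off a maximal ascending run of $f(w)$ via Proposition~\ref{descent} and the involution/jog equivalence noted before the corollary. You are in fact somewhat more careful than the paper, since you make the descent-complementarity of $T$ versus $T^T$ explicit and you verify the boundary entries $a-1$ and $a+b$ to establish non-extendibility, which the paper only asserts.
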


\begin{proof} If the  $a, a+1, \cdots ,a+b$ form a layer in $w$, then the positions $a,a+1,\cdots ,a+b-1$ are descents
in $w$, so those entries are descents in $Q(w)$, which implies that they are {\em not} descents in $Q(w)^T$, and hence
they are not descents in $f(w)$. As $f(w)$ is an involution, it follows that the
entries $a,a+1,\cdots ,a+b$ form a non-extendible increasing subsequence of consecutive integers in $f(w)$, that is,
they form a jog.
\end{proof}

In fact, Corollary~\ref{easyfact} extends (by the same proof) to:

\begin{corollary}
\label{easyfact_inv}  If $p$ is an involution, and the entries $a,a+1$ are in ascending order in $p$, then $a,a+1$ will form an inversion (be in descending order) in $f(p)$.
\end{corollary}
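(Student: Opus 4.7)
The plan is to reduce the statement to a tableau-level computation via Remark~\ref{inverse}, mirroring the proof of Corollary~\ref{easyfact} but applied to a single pair of consecutive integers rather than a full layer.

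First, I would translate the hypothesis into a statement about $Q(p)$. Since $p$ is an involution in which $a$ precedes $a+1$ (so $a+1$ does not precede $a$), Remark~\ref{inverse} tells us that the entry $a$ is \emph{not} a descent of $Q(p)$; equivalently, $a+1$ lies in the same row as $a$ or in a strictly higher row of $Q(p)$.

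Next, I would invoke the fact, used implicitly in the proof of Corollary~\ref{easyfact}, that transposition swaps the descent set of a SYT with its complement in $\{1,\ldots,n-1\}$. The reason is that in any SYT, $i+1$ either lies strictly below $i$ (making $i$ a descent of $T$) or strictly to the right of $i$ in the same or a higher row (making $i$ a descent of $T^T$), and these two alternatives are exclusive and exhaustive. Applying this to $T = Q(p)$, the non-descent $a$ of $Q(p)$ becomes a descent of $Q(p)^T = Q(f(p))$.

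Finally, I would transport the tableau statement back to the permutation side: since $f(p)$ is an involution, Remark~\ref{inverse} applied to $f(p)$ says that $a$ being a descent of $Q(f(p))$ is equivalent to $a+1$ preceding $a$ in $f(p)$, which is exactly the claim that $a, a+1$ form an inversion in $f(p)$. The only step even meriting justification is the descent-flipping behavior of transposition, which is elementary; this is why the authors describe the extension as being ``by the same proof''.
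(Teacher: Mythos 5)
Your proof is correct and follows essentially the same route as the paper, which establishes Corollary~\ref{easyfact} via Proposition~\ref{descent}, Remark~\ref{inverse}, and the fact that transposing a SYT complements its descent set, and then notes that Corollary~\ref{easyfact_inv} follows ``by the same proof.'' Your only addition is to spell out explicitly the elementary descent-complementation property of transposition, which the paper leaves implicit.
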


A {\em $k$-increasing subsequence} in a permutation is the union of $k$ increasing subsequences. For instance,
if $p=741852963$, then 748596 is a 2-increasing subsequence in $p$ as it is the union of the increasing subsequences 
789 and 456. Similarly, a {\em $k$-decreasing subsequence}
 is a union of $k$ decreasing subsequences. 

The following theorem connects $k$-increasing and $k$-decreasing subsequences of permutations to their 
images by the RS bijection. 

\begin{theorem}[Greene-Fomin-Kleitman, GFK] \label{gfk} \cite{fomin} \cite{greene-kleitman}
Let $p$ be a permutation, and let $a_i$ denote the length of the $i$th row of $P(p)$.
Then for all $k$, the sum $a_1+a_2+\cdots +a_k$ is equal to the length of the longest $k$-increasing subsequence of 
$p$. 

Equivalently, let $b_i$ denote the length of the $i$th column of the $P$-tableau of $p$. Then for all $k$, the sum $b_1+b_2+\cdots
+b_k$ is equal to the length of the longest $k$-decreasing subsequence of $p$. 
\end{theorem}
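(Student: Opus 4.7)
The plan is to prove the row version first and deduce the column version by applying the row version to the reverse permutation: since $P(p^{\rev}) = P(p)^T$ (already cited earlier in the paper), the rows of $P(p^{\rev})$ become the columns of $P(p)$, and since reversal swaps increasing with decreasing subsequences, a $k$-increasing subsequence in $p^{\rev}$ is the same as a $k$-decreasing subsequence in $p$.

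For the row statement, I would induct on $n$. Write $p' = p_1 p_2 \cdots p_{n-1}$, set $x = p_n$, and let $P, P'$ denote the insertion tableaux of $p$ and $p'$, with shapes $\mu \supset \mu'$. The RSK insertion of $x$ into $P'$ terminates with a new box appended to the end of some row $r$, so $a_i(P) - a_i(P') = [i = r]$. Writing $\ell_k(p)$ for the length of the longest $k$-increasing subsequence of $p$, the inductive step reduces to the recursion
\[
\ell_k(p) - \ell_k(p') = [r \le k].
\]
The base case $k = 1$ (for all $n$) is Schensted's theorem, which I would handle by a direct analysis of the first row under RSK insertion: the length of the first row after inserting $p_1, \ldots, p_j$ matches the length of the longest increasing subsequence of the prefix $p_1 \cdots p_j$, verified by a simple bump-path argument.

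The core work is a Greene-style exchange argument establishing the recursion. For the upper bound when $r > k$, given a maximum $k$-increasing subsequence $S$ of $p$ containing $x$, one uses the bumping path of $x$ in $P'$ (which visits rows $1, \ldots, r$ and so has length $r > k$) to find a cell in some row $\le k$ whose entry is not used by $S$; swapping that entry in for $x$ produces a $k$-increasing subsequence of $p'$ of the same length. For the lower bound when $r \le k$, one starts from an optimal $k$-chain-system in $p'$ and appends $x$ to one of its chains, using that the bumping path stays in rows $\le k$ to ensure room in the appropriate chain. The main obstacle will be making the swap rigorous: the optimal chain-system is arbitrary and interacts with the bumping path in potentially complicated ways, so keeping the chains disjoint after an exchange requires a carefully chosen greedy replacement. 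If this becomes too intricate, I would fall back on Viennot's geometric shadow-line construction: plot the points $(i, p_i)$, extract successive shadow lines $L_1, L_2, \ldots$ from the southwest frontier of the remaining points, and verify by local analysis of the geometric RSK that $L_j$ carries exactly $a_j$ points while $L_1 \cup \cdots \cup L_k$ realizes a maximum $k$-increasing subsequence of $p$.
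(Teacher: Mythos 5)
The paper does not prove this statement at all: Theorem \ref{gfk} is quoted as a classical result with citations to Greene--Kleitman and Fomin, so there is no in-paper argument to compare yours against, and your attempt has to stand on its own as a proof of Greene's theorem. As such, it has a genuine gap exactly where the theorem's content lies. Your reduction of the column statement to the row statement via $P(p^{\rev})=P(p)^T$ is fine, and the base case $k=1$ is Schensted's theorem, but the inductive step you need, $\ell_k(p)-\ell_k(p')=[r\le k]$, is equivalent to Greene's theorem itself, and the two exchange arguments you sketch for it do not go through as stated. For the direction $r\le k$, you cannot append $x$ to a chain of an \emph{arbitrary} optimal $k$-chain system of $p'$: already for $k=1$ and $p=312$ (so $p'=31$, $x=2$, $r=1$) the optimal chain $\{3\}$ of $p'$ cannot be extended by $x$, so the argument must first replace the given optimal family by a compatible one, which is the nontrivial step. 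For the direction $r>k$, the values on the bumping path in rows $\le k$ all exceed $x$ and sit at earlier positions of $p'$, so ``swapping one in for $x$'' need not preserve increasingness of the chain containing $x$, nor disjointness from the other chains; no mechanism is offered to guarantee either. Since you yourself flag this as the main obstacle and defer it to an unspecified ``greedy replacement,'' the core of the proof is missing.

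The fallback via Viennot's construction does not repair this, because it misstates what the construction provides: the first-level shadow lines are \emph{decreasing} subsequences of $p$, there are $a_1$ of them (their rays record the first rows of $P$ and $Q$), and the later rows $a_2,a_3,\dots$ arise from iterating on the skeleton of northeast corners, which are new points, not entries of $p$. So it is not true that $L_j$ carries $a_j$ points of $p$, and the union $L_1\cup\dots\cup L_k$ cannot be read off as a $k$-increasing subsequence of $p$; moreover, even a corrected lower-bound construction of $k$ disjoint increasing subsequences of sizes $a_1,\dots,a_k$ would still leave the matching upper bound $\ell_k(p)\le a_1+\dots+a_k$ unproved (greedy layerings are known not to realize Greene's quantities in general). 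If you want a complete argument, the standard routes are Greene's original one --- show that $\ell_k$ is invariant under Knuth moves and evaluate it on the row word of $P(p)$, where both bounds are easy --- or Fomin's growth-diagram formulation, which is essentially a rigorous version of the local induction you are attempting.
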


\begin{corollary} \label{layercor} Let $w$ be a layered permutation, and
let $b_i$ denote the length of the $i$th column of the tableau of $w$. Then $b_i$ is equal to 
the length of the $i$th longest layer of $w$. 
\end{corollary}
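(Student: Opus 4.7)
The plan is to derive the corollary as a direct application of the Greene-Fomin-Kleitman theorem, specifically the column version of Theorem \ref{gfk}, which says that $b_1+b_2+\cdots+b_k$ equals the length of the longest $k$-decreasing subsequence of $w$. So I would aim to identify this quantity, for each $k$, with the sum of the $k$ longest layer lengths of $w$; subtracting the resulting equality for consecutive values of $k$ then yields $b_k$ equal to the length of the $k$th longest layer.

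The key structural observation I would establish first is that in a layered permutation, every decreasing subsequence lies entirely inside a single layer. This is because any two entries drawn from distinct layers form an \emph{ascending} pair in $w$: by the definition of ``layered,'' the entry from the earlier layer has both smaller position and smaller value than the entry from the later layer. With this in hand, any $k$-decreasing subsequence $S \subseteq w$ can be written as a disjoint union of $k$ decreasing pieces, each of which must be contained in some single layer. If two of those pieces happened to lie in the same layer, their union would itself be a decreasing subsequence (as the entire layer is decreasing), so I may assume without loss of generality that the $k$ pieces lie in $k$ distinct layers. Therefore $|S|$ is bounded above by the sum of the $k$ longest layer lengths.

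Conversely, the union of the $k$ longest layers of $w$ is itself a $k$-decreasing subsequence of exactly that length, since each individual layer is decreasing; so the upper bound is tight. Combining with Theorem \ref{gfk} yields $b_1+\cdots+b_k = \ell_1+\cdots+\ell_k$ for every $k$, where $\ell_1 \geq \ell_2 \geq \cdots$ denote the layer lengths of $w$ listed in decreasing order, and subtracting consecutive equations finishes the argument. I do not foresee a serious obstacle: the only step that takes a moment's care is the ``WLOG the $k$ pieces live in distinct layers'' reduction, but it follows instantly from the fact that any subset of a decreasing sequence is decreasing.
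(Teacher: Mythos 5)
Your proof is correct and follows essentially the same route as the paper: the key observation that a decreasing subsequence cannot meet more than one layer, so the longest $k$-decreasing subsequence is exactly the union of the $k$ longest layers, combined with the column version of Theorem \ref{gfk}. The paper's proof is just a terser version of this argument, leaving the telescoping/subtraction step implicit.
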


\begin{proof} 
A decreasing subsequence cannot contain entries from more than one layer of $w$, so a  $k$-decreasing subsequence
of $w$ must consist of subsequences of $k$ distinct layers of $w$. So, in order to maximize the length of a $k$-decreasing
subsequence, one must choose complete layers, and the $k$ longest ones. 
\end{proof}

Let us keep the notation of Theorem \ref{gfk}. As in any permutation $p$, any $k$ jogs form a $k$-increasing subsequence, it follows that for any permutation $p$, the sum 
$a_1+a_2+\cdots +a_k$ must be at least as large as the combined length of the $k$ longest jogs of $p$. 
This motivates the following definition. 

\begin{definition} We say that a permutation $p$ is {\em GFK-tight} if, for all $k$, the combined length of the $k$
longest jogs of $p$ is equal to the length of the longest $k$-increasing subsequence of $p$. 
\end{definition}

\begin{example}  Consider $p=673481259$.  The jogs of $p$ in descending order of length are $6789, 345, 12$.  These jogs also can be used to create corresponding $k$-increasing subsequences of longest length, namely $6789, 6734859, 673481259$.
\end{example}

The following theorem will show that the images of layered permutations under our involution $f$ are precisely the 
GFK-tight involutions. 

\begin{theorem} \label{characterization} Let $p=p_1p_2\cdots p_n$ be an involution.  Then $p$ satisfies the transposed layer condition
if and only if $p$ is GFK-tight. 
\end{theorem}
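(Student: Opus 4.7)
My plan is to prove the two directions separately. The forward direction (transposed layer $\Rightarrow$ GFK-tight) follows quickly from Corollaries~\ref{easyfact} and~\ref{layercor}, while the reverse direction requires a counting argument whose main technical step is a uniqueness lemma for standard Young tableaux.

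For the forward direction, observe that the transposed layer condition on $T = P(p)$ is equivalent to the layer condition on $T^T = P(f(p))$; hence by Lemma~\ref{layered}, $p$ satisfies the transposed layer condition iff $p = f(w)$ for some layered $w$. Writing $p = f(w)$, Corollary~\ref{easyfact} says each layer of $w$ becomes a jog in $p$, so the multiset of jog sizes of $p$ equals the multiset of layer sizes of $w$. By Corollary~\ref{layercor}, the layer sizes of $w$ sorted decreasingly equal the column lengths of $P(w)$, i.e.\ the row lengths $a_1, \ldots, a_m$ of $T = P(w)^T$. Therefore the $k$ longest jogs of $p$ have total length $a_1 + \cdots + a_k$, which by Theorem~\ref{gfk} is the length of the longest $k$-increasing subsequence of $p$; thus $p$ is GFK-tight.

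For the reverse direction I would use counting. Given a composition $\alpha = (c_1, \ldots, c_m)$ of $n$, there is a canonical ``anti-layer'' construction of a SYT: the $k$-th anti-layer places the consecutive values $c_1+\cdots+c_{k-1}+1, \ldots, c_1+\cdots+c_k$ into columns $1, 2, \ldots, c_k$, each at the first available row. One checks inductively that this produces a valid SYT satisfying the transposed layer condition, with jog composition in natural order equal to $\alpha$. Thus there are exactly $2^{n-1}$ transposed-layer involutions, all GFK-tight by the forward direction, giving $|\text{GFK-tight}| \geq 2^{n-1}$. To establish the matching upper bound, I would show the map $p \mapsto \alpha(p)$ (jog composition in natural order) is injective on GFK-tight involutions. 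Since two GFK-tight involutions with the same jog composition have tableaux of the same shape $\lambda = \mathrm{sort}(\alpha)$ and the same descent set (the partial sums of $\alpha$), injectivity reduces to the lemma that for every composition $\alpha$ rearranging a partition $\lambda$, there is a unique SYT of shape $\lambda$ with descent composition $\alpha$.

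The main obstacle is proving this uniqueness lemma. The natural approach is induction on $n$: in any such SYT $T$, one argues that the entry $n$ is forced to lie at $(\text{row}, \text{column}) = (|\{k : c_k \geq c_m\}|, c_m)$, so that removing $n$ yields a SYT with composition $(c_1, \ldots, c_m - 1)$ (or $(c_1, \ldots, c_{m-1})$ if $c_m = 1$) of the appropriately truncated shape, to which the inductive hypothesis applies. Forcing the column of $n$ to equal $c_m$ is the delicate step: one must combine the fact that $n$ sits at an outer corner of $\lambda$, the non-increasing row sequence within the last jog (a consequence of there being no descent between successive values of that jog), and the tight match between jog sizes and row lengths of $\lambda$ guaranteed by GFK-tightness. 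Any one of these constraints alone is insufficient, so the interaction among them is the true heart of the proof.
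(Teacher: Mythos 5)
Your forward direction is essentially the paper's own argument: via Lemma~\ref{layered}, a transposed-layer involution is $f(w)$ for a layered $w$, and then Corollary~\ref{layercor}, Corollary~\ref{easyfact} and Theorem~\ref{gfk} give GFK-tightness; no issue there. The reverse direction, however, has a genuine gap. Your counting scheme is sound in outline (the map from GFK-tight involutions to jog compositions lands in a set of size $2^{n-1}$, transposed-layer involutions already account for $2^{n-1}$ GFK-tight involutions, so injectivity of that map would finish the proof, and the reduction of injectivity to your uniqueness lemma is correct, since a GFK-tight involution's jog composition determines both the shape and the descent set of its tableau). But the whole argument then rests on the uniqueness lemma --- for every composition $\alpha$ rearranging $\lambda$ there is exactly one SYT of shape $\lambda$ with descent composition $\alpha$ --- and you do not prove it: you yourself identify the forcing of the cell of $n$ as ``the delicate step'' and ``the true heart of the proof,'' and the sketch stops exactly there. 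The omission is not cosmetic. A greedy placement of $1,2,\ldots,n$ subject to the descent constraints is not locally forced (already for $\lambda=(3,2,1)$, $\alpha=(1,2,3)$ there are legal intermediate choices that only die later), so the induction you outline genuinely needs the global interplay you allude to. Your claimed location of $n$, namely row $|\{k: c_k\ge c_m\}|$ and column $c_m$, appears correct, but establishing it is an argument of roughly the same substance as the theorem itself. (The lemma is in fact true; one clean proof expands the ribbon Schur function $r_\alpha$ into complete homogeneous functions indexed by coarsenings of $\alpha$, notes that every proper coarsening sorts to a partition strictly dominating $\lambda$ so its Kostka coefficient vanishes, and is left with $K_{\lambda\lambda}=1$ --- but that machinery is heavier than anything the paper uses.)

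For comparison, the paper's reverse direction is direct and avoids counting entirely. GFK-tightness forces the $i$th row of $P(p)$ to have the length $\ell_i$ of the $i$th longest jog; then, by induction on $j$ with jogs ordered by decreasing length, one shows the entries of the $j$th longest jog occupy columns $1,2,\ldots,\ell_j$, one entry per column (no two entries of a jog can share a column, by Remark~\ref{inverse}). If a jog skipped a column, then after rearranging entries within columns so that the first $j-1$ rows consist of the $j-1$ longest jogs, some entry of $A_j$ would lie strictly to the right of column $\ell_j$, making the $j$th row longer than $\ell_j$ and contradicting GFK-tightness. So your route is viable and genuinely different, but to make it a proof you must actually supply the uniqueness lemma; as written, the central step is asserted rather than proved.
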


\begin{proof}  Let $a_i$ be the length of the $i$th row of the tableau of $p$.

Let us first assume that $p$ satisfies the transposed layer condition.
Then $P(p)^T$ is layered, so it is the $P$-tableau of a layered permutation $w$, whose $i$th column
is of length $a_i$. By Corollary \ref{layercor}, the $k$ longest layers of $w$ have combined length $\sum_{i=1}^ka_i$, 
so by Corollary \ref{easyfact}, the combined length of the $k$ longest jogs of $f(w)=p$ is also  $\sum_{i=1}^ka_i$, 
so by Theorem \ref{gfk}, $p$ is GFK-tight. 

Now let us assume that $p$ is GFK-tight. Note that this implies that the first row of $P(p)$ is as long as the longest
jog of $p$, the second row of $P(p)$ is as long as the second longest jog of $p$, and so on. 

We will show that $P(p)$ satisfies the transposed layer condition by showing that for each jog $A$ of $p$, the
entries that belong to $A$ belong to consecutive columns of $P(p)$, starting with the leftmost column. Let $A_j$ be
the $j$th longest jog of $p$, and let $A_j$ be of length $\ell_j$. We prove our statement by induction on $j$. 

First, let $j=1$. If our claim did not hold for $A_1$, that would mean that the entries of $A_1$ would "skip" a column, 
possibly the first one. As no column can contain more than one entry of $A_1$ (by Remark \ref{inverse}), this would imply that at least one entry of $A_1$ is strictly on the right of the $\ell_1$st column of $P(p)$, which
in turn would imply that the first row of $P(p)$ is longer than $\ell_1$. That would contradict the assumption that $p$ is
GFK-tight since it would imply that the longest increasing subsequence of $p$ is longer than its longest jog. 

Now let us assume that our claim holds for all indices less than $j$, and prove it for $j$. Our conditions then imply that for
any $i<j$, each of the first $\ell_i$ columns of $P(p)$ contains exactly one entry of $A_i$. So 
the entries within each column of $P(p)$ can be rearranged so that the first row will consist of the entries of $A_1$, the 
second row will consist of the entries of $A_2$, and so on, and the $(j-1)$st row will consist of the entries of $A_{j-1}$. 
These rearrangements of the entries do not change 
the column in which any one
entry is located. So, after these rearrangements, the entries that belong to $A_j$ are all below the first $j-1$ rows, and, 
if they skip a column, at least one of them is strictly on the right of the $\ell_j$th column. However, that implies that
the $j$th row of $P(p)$ is longer than $\ell_j$, contradicting the assumption that $p$ is GFK-tight.  
\end{proof}

So if we know that $p\in f(L_n)$, that is, that $p$ is a GFK-tight involution,  then we can obtain $f(p)$ as the unique layered permutation whose layers are identical to the reverses of the jogs of $p$. In order to decide whether $p$ is GFK-tight
or not, it suffices to construct its $P$-tableau and see if it satisfies the transposed layer condition. 

For the sake of completeness, we say that a permutation $p$ is {\em dually GFK-tight} if, for all $k$, the length of the
longest $k$-decreasing subsequence of $p$ is equal to the combined length of the $k$ longest reverse jogs in $p$, 
where a reverse jog is a nonextendible decreasing subsequence of consecutive integers. This leads to the analogous
version of Theorem \ref{characterization}. 

\begin{theorem} A permutation $p$ is layered if and only if it is a dually GFK-tight involution.
\end{theorem}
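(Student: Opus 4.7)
The plan is to derive this theorem from Theorem \ref{characterization} by applying the involution $f$. Specifically, I will establish the chain of equivalences: $p$ is layered $\Leftrightarrow$ $P(p)$ is a layered tableau (Lemma \ref{layered}) $\Leftrightarrow$ $P(p)^T$ satisfies the transposed layer condition (by direct comparison of the two definitions) $\Leftrightarrow$ $P(f(p))$ satisfies the transposed layer condition (since $P(f(p)) = P(p)^T$) $\Leftrightarrow$ $f(p)$ is a GFK-tight involution (Theorem \ref{characterization}) $\Leftrightarrow$ $p$ is a dually GFK-tight involution. Layered permutations are automatically involutions by Remark \ref{linvol}, and $f$ is a bijection on $I(n)$, so the "involution" clauses match up correctly.

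The only nontrivial step is the final equivalence, namely that an involution $p$ is dually GFK-tight if and only if $f(p)$ is GFK-tight. This requires two ingredients. First, by Theorem \ref{gfk} applied to both $p$ and $f(p)$, together with the fact that the $i$th column of $P(p)$ has the same length as the $i$th row of $P(f(p)) = P(p)^T$, the length of the longest $k$-decreasing subsequence of $p$ equals the length of the longest $k$-increasing subsequence of $f(p)$. Second, the reverse jogs of $p$, viewed as sets of consecutive integers, should coincide exactly with the jogs of $f(p)$, so that in particular their multisets of lengths agree.

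This second ingredient is the main obstacle: it amounts to strengthening Corollary \ref{easyfact_inv} to an if-and-only-if statement. Via Proposition \ref{descent} and Remark \ref{inverse} applied to both $p$ and $f(p)$, together with $P(f(p)) = P(p)^T$, this reduces to the purely tableau-theoretic assertion that for any SYT $T$ on $n$ boxes, the descent sets of $T$ and $T^T$ partition $\{1, 2, \ldots, n-1\}$. I would prove this by a short case analysis on the relative position of $i$ and $i+1$ in $T$: the four possible cases (same row; same column; $i+1$ strictly lower and strictly to the left of $i$; $i+1$ strictly higher and strictly to the right of $i$) are exhaustive, and they split cleanly as "descent in $T^T$ only" for cases one and four, and "descent in $T$ only" for cases two and three. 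Once this tableau fact is in hand, the chain of equivalences above completes the proof.
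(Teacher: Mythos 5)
Your proposal is correct, but it takes a genuinely different route from the paper, which offers no written argument at all for this statement: it simply declares it the ``analogous version'' of Theorem \ref{characterization}, i.e.\ the intended proof is to repeat that theorem's argument with rows and columns, jogs and reverse jogs, interchanged (the ``only if'' direction being essentially Corollary \ref{layercor} plus the observation that the layers of a layered permutation are exactly its reverse jogs). You instead transfer the already-proved Theorem \ref{characterization} through the involution $f$, and the price you pay is the key lemma that the descent sets of $T$ and $T^T$ are complementary in $\{1,\ldots,n-1\}$; via Proposition \ref{descent} and Remark \ref{inverse} this upgrades Corollary \ref{easyfact_inv} to an equivalence and shows that the reverse jogs of an involution $p$ coincide, block by block, with the jogs of $f(p)$, which together with Theorem \ref{gfk} applied to $P(p)$ and $P(p)^T$ gives ``$p$ dually GFK-tight $\Leftrightarrow$ $f(p)$ GFK-tight.'' Your four-case analysis is right and the cases are indeed exhaustive, though you should record the one-line reason: if $i+1$ were strictly southeast (or strictly northwest) of $i$ in a SYT, the cell in the row of $i$ and the column of $i+1$ (respectively, the row of $i+1$ and the column of $i$) would contain an entry strictly between $i$ and $i+1$. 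Also, the step ``$P(p)$ layered $\Rightarrow p$ layered'' silently uses that $p$ is an involution and that RS restricted to involutions is injective, which is fine here since the relevant hypothesis supplies the involution clause. What your route buys is that no induction has to be redone, and the complementary-descent lemma strengthens Corollary \ref{easyfact} into an exact description of how $f$ trades layers for jogs; what the paper's implicit route buys is self-contained symmetry, needing nothing beyond the proof already written for Theorem \ref{characterization}.
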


This is the new characterization of layered permutations that we promised at the beginning of this section.

The results in this section raise an intriguing question. Let us consider {\em permutations} $p$ for which both
$P(p)$ and $Q(p)$ satisfy the layer condition or the transposed layer condition, but they are not necessarily identical.
 How can we describe these permutations? 
We will return to these questions in Section \ref{generalized}.

\section{Involutions that avoid 321 or 123} \label{catalan}

If a permutation does not contain a decreasing (resp. increasing) subsequence of length three, then we say that it
{\em avoids} the pattern 321 (resp. 123). 

It follows from Theorem \ref{gfk} that if an involution $p$ avoids the pattern 321, then its tableau consists of at most
two rows. Therefore, the image $f(p)$ consists of at most two columns. 

It turns out that given $P(p)$, we can recover the involution $p$ without running the inverse of the RS bijection. 
In an involution, each element is either a fixed point, or part of a 2-cycle. If $(i \ j)$ is a 2-cycle, and $i<j$, then we call 
$i$ a {\em small entry} and $j$ a {\em large entry}. So each entry of an involution is either a fixed point, or a small entry, or a large entry. 

\begin{proposition}~\label{2_row}  Let $p$ be a 321-avoiding involution. Then the first row of $P(p)$  consists of all the small entries and all the fixed points, and the second row consists of all the large entries. 
\end{proposition}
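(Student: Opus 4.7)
I would prove this by tracking the RSK insertion of $\pi$ from left to right, maintaining the invariant (Inv) that after inserting the first $k$ entries $\pi_1,\ldots,\pi_k$, the second row of the running tableau contains exactly those large entries $l$ with $l \le k$. Since $\pi$ is $321$-avoiding, Theorem~\ref{gfk} guarantees that $P(\pi)$ has at most two rows, so each insertion bumps at most once.

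The inductive step splits into three cases depending on the role of position $k$ in $\pi$. Note first that (Inv) at step $k-1$ forces row~1 to consist of three kinds of elements: small entries coming from cycles $(s'',l'')$ with $l'' \le k-1$, fixed points $\le k-1$, and ``open'' large entries, i.e.\ those $l'$ whose cycle $(s',l')$ satisfies $s' \le k-1 < l'$. If $k$ is a fixed point, then no cycle $(s',l')$ can satisfy $s' < k < l'$, since otherwise the positions $s', k, l'$ carry the decreasing values $l', k, s'$, a $321$ pattern; hence row~1 contains no open large entry and the fixed point $k$ is appended. If $k$ is the small member of a cycle $(k,l)$, then any open $l'$ in row~1 must satisfy $l' < l$, for otherwise the cycles $(s',l')$ and $(k,l)$ would be nested with $s' < k < l < l'$, producing values $l' > l > k$ at positions $s' < k < l$, again a $321$ pattern; so $l$ exceeds every entry in row~1 and is appended.

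The delicate case is when $k$ is the large member of a cycle $(s,k)$ and we insert $s$. By (Inv), $k$ itself is still an open large entry and hence sits in row~1. I would verify that no fixed point and no small entry of row~1 lies strictly between $s$ and $k$. A fixed point $f$ with $s < f < k$ yields the $321$ pattern $k, f, s$ at positions $s, f, k$. A small entry $s''$ of row~1 with $s < s'' < k$ belongs to a cycle $(s'',l'')$ with $l'' \le k-1$, forcing $s < s'' < l'' < k$, and then the positions $s < s'' < k$ carry the values $k, l'', s$, once more a $321$ pattern. Consequently the smallest element of row~1 that exceeds $s$ is precisely $k$, so $s$ bumps $k$ out of row~1; since row~2 previously contained only large entries $\le k-1 < k$, the bumped $k$ is appended to the end of row~2, preserving (Inv). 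Setting $k = n$ at the end yields row~2 equal to the set of all large entries of $\pi$, and therefore row~1 consists of all small entries together with all fixed points, as claimed.

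The main obstacle is this last case: one has to keep the inductive bookkeeping consistent (in particular, that $k$ is still sitting in row~1 at the moment it is needed) while simultaneously running a pattern-avoidance argument to clear row~1 of any entry strictly between $s$ and $k$. Each sub-claim, however, reduces to producing a forbidden $321$ pattern from a hypothetical obstruction, so the calculation stays clean and the invariant carries through.
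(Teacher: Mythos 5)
Your proof is correct, but it takes a genuinely different route from the paper's. The paper establishes three separate static claims: a small entry can never be displaced from the first row, and a fixed point can never be bumped (each by exhibiting a forbidden $321$ pattern from a hypothetical bumping), and then that \emph{every} large entry does get bumped by a counting argument --- if some large entry survived in row one, the first row would have length at least $k+m+1$ (with $k$ fixed points and $m$ two-cycles), which by Theorem~\ref{gfk} would force an increasing subsequence containing both members of some $2$-cycle, an impossibility. You instead run an induction over the insertion steps with an explicit invariant describing the second row after each prefix, and your case analysis pins down the exact dynamics: a fixed point or a large value is always appended to row one, and the small entry $s$ of a cycle $(s,k)$ bumps \emph{precisely} its partner $k$, which is then appended to row two. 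Your version requires more bookkeeping (the verification that row one never contains an entry strictly between $s$ and $k$, and that $k$ is still sitting in row one when needed), but it buys a sharper conclusion than the proposition itself --- the full bumping history --- and it eliminates the counting step via Theorem~\ref{gfk}; indeed, your induction even shows directly that no third row ever arises, so your appeal to the two-row fact could be dropped. The paper's proof is shorter and more modular, outsourcing the ``all large entries are bumped'' half to Greene--Fomin--Kleitman rather than tracking the insertion process.
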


\begin{proof}
A small entry can never be displaced during the formation of $P(p)$. Indeed, if $(ab)$ is a $2$-cycle of $p$, $a$ is the small entry in it, and later on, $a$ is displaced by $x$, then $bax$ is a $321$-pattern in $p$. 

Also, a fixed point $y$ can never be bumped. If such a $y$ is bumped by $j$, then $j<y$, and $j$ follows $y$. So $j=p_i$, where $i>y$.  However, this implies that  $(i\ j)$ is a $2$-cycle, so $p_j=i$, and $iyj$ is a $321$-pattern.

On the other hand, one can see that all large entries will be bumped. By way of contradiction, suppose at least one large entry is not bumped to the second row.  Let $k$ be the number of fixed points, let $m$ be the number of $2$-cycles. Then the first row would be
at least $k+m+1$ entries long, which would imply, by Theorem \ref{gfk}, the existence of an increasing subsequence of length $k+m+1$. That is impossible, since that would imply that there is a $2$-cycle whose entries both belong to that increasing subsequence. 
\end{proof}

We will now describe a way to directly recover the 321-avoiding involution $p$ of length $n$ from its tableau $P(p)$. 

If $n$ is in the first row, then $n$ is a fixed point.  Remove $n$ from $P(p)$, and continue with $n-1$.

If instead, $n$ is in the second row, then  $n$ must be a large entry in $p$. 

We claim $n$ must be in a $2$-cycle with the largest entry, say $k$ in the first row.  If this were not the case, then $n$ is in a $2$-cycle with $j$ where $j<k$.  We then have two options.  One, $k$ is a fixed point which means $n k j$ is a $321$ pattern in $p$.  Or two, $k$ is the small entry of a $2$-cycle with $m$ where $k < m < n$ and then $nmkj$ is a $4321$ pattern in $p$.  Hence, we know $n$ must be in a $2$-cycle with $k$ and we remove $n$ and $k$ from $P(p)$. 

We can then continue this process with the next largest entry remaining in $P(p)$.

\begin{theorem}~\label{reverse_321}  To obtain a $321$-avoiding permutation $p$ from $P(p)$, while $P(p) \neq \emptyset$, let $n$ be the maximum of all entries in $P(p)$.  If $n$ is in the top row, then $n$ is a fixed point of $p$ and we remove $n$ from $P(p)$.  Else, $n$ is in a $2$-cycle with $k$ where is the maximum of all entries in row one of $P(p)$.  Remove $n$ and $k$ from $P(p)$ and continue this process with the next largest entry remaining in $P(p)$.
\end{theorem}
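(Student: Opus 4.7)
The plan is to proceed by induction on $n = |\pi|$, with the base case $n=1$ being immediate. For the inductive step, let $\pi$ be a 321-avoiding involution with $P$-tableau $P(\pi)$. By Proposition~\ref{2_row}, the first row of $P(\pi)$ holds exactly the fixed points and small entries of $\pi$, while the second row holds the large entries. Since the maximum value $n$ of $\pi$ sits at the end of one of these two rows, the algorithm's first step splits into exactly the two cases named in the statement.

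If $n$ is in row~1, then by Proposition~\ref{2_row} it is either a small entry or a fixed point; as the maximum value it has no larger partner, so it must be a fixed point. Writing $\pi = \pi_1\pi_2\cdots \pi_{n-1}n$, the entry $n$ exceeds every preceding value, so RS-insertion simply appends $n$ to the end of row~1 of the tableau built from $\pi_1\cdots\pi_{n-1}$. Consequently, $P(\pi)$ with $n$ deleted is the $P$-tableau of the shorter 321-avoiding involution $\pi' = \pi_1\cdots\pi_{n-1}$, and the induction hypothesis finishes this case.

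If $n$ is in row~2, the paragraphs preceding the theorem already show that $n$ must be paired with $k :=$ the maximum entry of row~1: otherwise $n$ is paired with some $j < k$, and then either $k$ is a fixed point (producing the 321-pattern $n\,k\,j$) or $k$ is a small entry with large partner $m \in (k,n)$ (producing the 4321-pattern $n\,m\,k\,j$, which contains a 321). After identifying the 2-cycle $(k,n)$, the inductive step reduces to checking that deleting positions $k$ and $n$ from $\pi$ (and relabelling the surviving values) yields a 321-avoiding involution whose $P$-tableau is exactly $P(\pi)$ with the boxes labelled $n$ and $k$ removed.

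The main obstacle is this last verification, since RS-insertion does not obviously commute with deletion of an interior 2-cycle. One route is to examine the insertion of $\pi$ directly: because $k$ is the largest element of row~1, at the moment $n$ is inserted (at position $k$ in the word) the entry $k$ is forced to be the rightmost row-1 entry and is bumped to become the rightmost entry of row~2, while all subsequent insertions affect only boxes strictly to the left, so the two extreme boxes $n$ and $k$ detach cleanly. A conceptually cleaner alternative sidesteps tableau surgery entirely: the same 4321-avoidance argument applied to any two 2-cycles $(a_1, b_1)$ and $(a_2, b_2)$ with $a_1 < a_2$ but $b_1 > b_2$ produces the decreasing subsequence $b_1\, b_2\, a_2\, a_1$, which is a forbidden pattern; hence sorted small entries pair with sorted large entries, and together with the identification of fixed points from Proposition~\ref{2_row} this determines $\pi$ from $P(\pi)$ exactly as the algorithm claims, without ever needing to reduce the tableau step by step.
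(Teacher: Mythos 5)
Your top-level case analysis is sound and matches the paper: by Proposition~\ref{2_row} the maximum $n$ lies in row~1 (forcing a fixed point) or row~2 (forcing a large entry), and the $321$/$4321$ pattern arguments identify $k$, the largest row-1 entry, as its partner. The problem is the reduction step you yourself flag as the main obstacle; neither route you offer closes it. The tableau-surgery sketch is wrong in its details: since $\pi_n=k$, the entry $k$ is the \emph{last} letter of the word, so it is not in the tableau ``at the moment $n$ is inserted'' (that insertion, at position $k$ of the word, simply appends $n$ to the end of row~1); moreover in $P(\pi)$ it is $n$, a large entry, that ends up in row~2, while $k$ stays in row~1, so the claim that $k$ ``is bumped to become the rightmost entry of row~2'' cannot be right, and the assertion that all later insertions affect only boxes strictly to the left is stated without proof. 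What one would actually need is that $n$ remains at the end of row~1 until the final letter $k$ bumps it to the end of row~2, which requires an argument (for instance, that every letter strictly between positions $k$ and $n$ is smaller than $k$, by $321$-avoidance applied to positions $k<i<n$).

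The ``cleaner alternative'' also has a gap. The $4321$ argument rules out nested pairs of $2$-cycles only, giving the order-preserving matching between small and large entries; it says nothing about fixed points. Proposition~\ref{2_row} does not ``identify the fixed points'' --- it only says row~1 is the union of fixed points and small entries --- so knowing the two row sets plus the sorted pairing does not yet tell you which row-1 entries are fixed, nor that the greedy step ``pair the largest remaining row-2 entry with the largest remaining row-1 entry'' is correct: a priori that largest remaining row-1 entry could be a fixed point lying in value between a large entry and its true partner. Excluding this is precisely the fixed-point case in the paper's argument (a fixed point $k$ with $j<k<n$ yields the $321$ pattern $n\,k\,j$), and it must be invoked at every stage of the iteration, not just for the global maximum. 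So either complete your induction honestly --- prove that deleting the cells containing $k$ and $n$ from $P(\pi)$ yields the $P$-tableau of $\pi$ with the $2$-cycle $(k\ n)$ removed --- or run the paper's direct argument on $\pi$ itself at each step, including the fixed-point exclusion; as written, neither is carried out.
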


\begin{example}  Let

$P(p) =\ytableausetup{centertableaux}
\begin{ytableau}
1 & 2 & 4 & 6 & 7\\
3 & 5 
\end{ytableau}.$

Then we first note that $7$ is a fixed point, then we note that 6 is a fixed point, after which we recover the $2$-cycles
$(4 \ 5)$ and $(2 \ 3)$, and the fixed point $1$, to obtain the involution $p=(1)(32)(54)(6)(7)=1325467$.
\end{example}

Therefore, if instead we have a $123$-avoiding involution $q$, and we know $Q(q)$, it is simple to compute $f(q)$ directly. In fact, 
knowing $Q(q)$ is equivalent to knowing its first column. For the $123$-avoiding involution $q=q_1q_2\cdots q_n$,
let us call the index $i$ a {\em record-breaker} if the longest decreasing subsequence of the initial segment 
$q_1q_2\cdots q_i$ is longer than that of $q_1\cdots q_{i-1}$. It then follows from Theorem \ref{gfk} that the 
entries in the first column of $Q(q)$ are precisely the record breakers of $q$. 

So, if $q$ is a $123$-avoiding involution, then we can first find its record-breakers, turn them into the set of 
small entries and fixed points of $f(q)$, turn the remaining entries of $q$ into the set of large entries of $f(q)$, and
finally match the small and large entries of $f(q)$ as explained in Theorem \ref{reverse_321}. 

\begin{example}
Let $q=6574213$, then the record-breakers of $q$ are $1, 2, 4, 5,$ and $6$. So the large entries of $f(q)$ are $3$ and $7$, 
while the other entries of $f(q)$ are fixed points or small entries. Therefore, Theorem \ref{reverse_321} explains that
$f(q)=  (1)(32)(4)(5)(76)=1324576$.
\end{example}

\section{A generalization of layered permutations} \label{generalized}

In this section, we are turning our attention to certain {\em permutations} instead of involutions. Our main 
result is the following. 

\begin{theorem} \label{general}  For any permutation $p$, the following two statements are equivalent. 
\begin{enumerate}
\item[(A)] Both $P(p)$ and $Q(p)$ satisfy the transposed layer conditions.
\item[(B)] Both $p$ and $p^{-1}$ are GFK-tight.
\end{enumerate}
\end{theorem}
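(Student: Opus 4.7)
The plan is to reduce Theorem \ref{general} to a one-sided claim that generalizes Theorem \ref{characterization} to arbitrary permutations, namely: for any permutation $p$, the tableau $P(p)$ satisfies the transposed layer condition if and only if $p$ is GFK-tight. Once this is in hand, the theorem follows quickly, because by Sch\"utzenberger's identity $P(p^{-1}) = Q(p)$, so the same claim applied to $p^{-1}$ yields that $Q(p)$ satisfies the transposed layer condition if and only if $p^{-1}$ is GFK-tight. Combining the two equivalences gives (A) $\Leftrightarrow$ (B).

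The reason this generalization should go through is that the proof of Theorem \ref{characterization} rested on the fact that descents of $P(p)$ correspond to jog endings of $p$, a correspondence which does not actually require $p$ to be an involution. Indeed, since $P(p) = Q(p^{-1})$, Proposition \ref{descent} together with Remark \ref{inverse} applied to $p^{-1}$ shows that the entry $i$ is a descent of $P(p)$ if and only if $i+1$ precedes $i$ in $p$, that is, if and only if $i$ is the last element of some jog of $p$. For the forward direction I would use this to compute the shape of $P(p)$ directly from the transposed layer condition: each jog of length $\ell$ contributes exactly one of its entries to each of the columns $1, 2, \ldots, \ell$ of $P(p)$, so column $c$ has length equal to the number of jogs of length at least $c$, and consequently the $r$-th row of $P(p)$ has length equal to the $r$-th longest jog of $p$. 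Theorem \ref{gfk} then gives GFK-tightness, and this argument sidesteps the map $f$, which is defined only on involutions. For the reverse direction, the inductive argument from Theorem \ref{characterization} carries over once one verifies that two consecutive integers in the same jog must lie in strictly increasing columns of $P(p)$; this is a short deduction from the descent correspondence above together with the row and column monotonicity of a SYT.

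The main obstacle is rewriting the forward direction so that it does not rely on the involution-only map $f$; the direct shape computation above is intended to serve this purpose, and it is where I expect most of the bookkeeping to concentrate. Once both directions of the generalized claim are established, Theorem \ref{general} follows immediately by applying the claim to both $p$ and $p^{-1}$ and using Sch\"utzenberger's identity to translate the $p^{-1}$ version into a statement about $Q(p)$.
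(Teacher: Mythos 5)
Your proposal is correct and is essentially the paper's own argument: the paper's proof of Theorem \ref{general} likewise proves the two one-sided implications separately (using the descent correspondence of Proposition \ref{descent} and Remark \ref{inverse} to tie $P(p)$ to the jogs of $p$ and $Q(p)$ to the jogs of $p^{-1}$, with the shape-versus-jog-length comparison from Theorem \ref{gfk} and the column-occupancy induction recycled from Theorem \ref{characterization}). Your packaging---stating the general one-sided equivalence ``$P(p)$ satisfies the transposed layer condition iff $p$ is GFK-tight'' explicitly and transferring it to $Q(p)$ via $P(p^{-1})=Q(p)$, with a direct shape computation in place of the paper's contradiction argument---is only a modest reorganization of the same ideas.
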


Note that in general, it is not true that $p$ is GFK-tight if and only if $p^{-1}$ is GFK-tight. While the shapes of the 
tableaux of $p$ and $p^{-1}$ are always the same, the lengths of their jogs may not be. For instance, $p=1423$ is
GFK-tight, but $p^{-1}=1342$ is not. Also note that if both $p$ and $p^{-1}$ are GFK-tight, then these two permutations
must have the same number of jogs of each length $\ell$, namely, the number of rows of length $\ell$ of their tableaux. 

\begin{proof}
\begin{enumerate}
\item[(A)] Let us assume that $P(p)$ and $Q(p)$ both satisfy the transposed layer conditions. Let $g_1=1,g_2,\cdots ,g_t$
be the entries in the first column of $Q(p)$. Then by Proposition \ref{descent}, the permutation $p^{-1}$ has $t$
jogs, one starting in each $g_i$. If an entry $x$ is the $j$th entry of its jog of $p^{-1}$, then the transposed layer condition implies
that $x$ appears in the $j$th column of $Q(p)$. Now let us assume that $p^{-1}$ is {\em not} GFK-tight. Then there is a
$k$ so that the longest $k$-increasing subsequence is longer then the combined length of the $k$ longest jogs of
$p^{-1}$. Choose the smallest such $k$, then the $k$th row of $Q(p)$ is of length $a_k$, whereas the $k$th longest jog
of $p^{-1}$ is of length $\ell_k$, with $a_k>\ell_k$. As the last entry of this row must be the $a_k$th entry of its
jog in $p^{-1}$, it follows that there are at least $k$ jogs in $p^{-1}$ that are of length $a_k$ or more, contradicting
the inequality $a_k>\ell_k$. This proves that $p^{-1}$ is GFK-tight.

Replacing $Q(p)$ by $P(p)$ and $p^{-1}$ by $p$, we get an analogous proof of the fact that $p$ is GFK-tight. 

\item[(B)] Let us assume that $p$ and $p^{-1}$ are both GFK-tight. It then follows that for all $i$, the $i$th row of $P(p)$ and
$Q(p)$ as of length $\ell_i$, the length of the $i$th longest jog of $p$. For any jog $A_j$ of $p$, no two entries of 
$A_j$ can be in the same column of $Q(p)$, since that would imply, by Remark \ref{inverse}, that there is an entry $i+1$ 
of that jog that precedes the entry $i$ in $p^{-1}$. From this, we can prove that $Q(p)$ satisfies the transposed layer
condition as we did in the proof of Theorem \ref{characterization}. Similarly, for any jog $B_j$ of $p^{-1}$, 
no two entries of 
$B_j$ can be in the same column of $Q(p^{-1})=P(p)$, since that would imply, by Remark \ref{inverse}, that there is an entry $i+1$ 
of $B_j$ that precedes the entry $i$ in $p$. Then we can prove that $P(p)$  satisfies the transposed layer
condition in the same way.
\end{enumerate}
\end{proof}

\begin{corollary} \label{genlay} 
For any permutation $p$, the following two statements are equivalent. 
\begin{enumerate}
\item[(A)] Both $P(p)$ and $Q(p)$ are layered.
\item[(B)] Both $p$ and $p^{-1}$ are dually GFK-tight.
\end{enumerate}
\end{corollary}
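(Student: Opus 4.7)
The plan is to mimic the proof of Theorem~\ref{general}, systematically substituting ``layered'' for ``transposed layer'', ``reverse jog'' for ``jog'', and ``$k$-decreasing'' for ``$k$-increasing''. The key structural observation is that in a layered SYT $T$ of shape $\lambda$ one may partition $\{1, 2, \ldots, n\}$ into what we will call \emph{streaks}: maximal runs of consecutive integers $a, a+1, \ldots, a+b$ for which $a$ lies in the top row of $T$ and each successive entry is placed directly below the previous one. The layered condition guarantees that such a streak of length $\ell$ occupies exactly rows $1, 2, \ldots, \ell$, contributing one entry to each. Hence the number of streaks of length at least $r$ equals the length $a_r$ of row $r$, so the multiset of streak lengths coincides with the column-length partition $\{b_1, b_2, \ldots\}$ of $T$.

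For $(A) \Rightarrow (B)$, assume $P(p)$ and $Q(p)$ are layered. Using Proposition~\ref{descent} and Remark~\ref{inverse}, the streaks of $Q(p)$ are in length-preserving bijection with the reverse jogs of $p^{-1}$: the ``directly below'' steps of a streak record the descents of $p$ inside a maximal descending run $p_a > p_{a+1} > \cdots > p_{a+b}$ at consecutive positions in $p$, while the ``top row'' at the start says that position $a-1$ (if any) is not a descent. Thus the reverse jog lengths of $p^{-1}$ form the multiset $\{b_i\}$, and by Theorem~\ref{gfk} the combined length of the $k$ longest reverse jogs of $p^{-1}$ equals $b_1 + \cdots + b_k$, which is the longest $k$-decreasing subsequence of $p^{-1}$. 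So $p^{-1}$ is dually GFK-tight, and running the same argument on the layered tableau $P(p) = Q(p^{-1})$ shows that $p$ is dually GFK-tight.

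For $(B) \Rightarrow (A)$, assume that $p$ and $p^{-1}$ are dually GFK-tight. Theorem~\ref{gfk} then forces the multiset of reverse jog lengths of $p^{-1}$ to match the column partition $\{b_i\}$, and by Proposition~\ref{descent} the values inside each reverse jog of $p^{-1}$ lie in strictly increasing rows of $Q(p)$. We proceed by induction on $r$ to show that every reverse jog of length at least $r$ has its $r$-th entry in row $r$. The number of reverse jogs touching row $r$ of $Q(p)$ is $a_r$ (each row-$r$ entry lies in a unique reverse jog), and by the conjugate-partition identity $a_r = \#\{i : b_i \ge r\}$ this also equals the number of reverse jogs of length at least $r$. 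Taking $r = 1$ places every reverse jog into the top row. For the inductive step, the jogs of length less than $r$ are already confined to rows $1, 2, \ldots, r-1$, so only jogs of length at least $r$ can account for the $a_r$ entries in row $r$; since each such jog has its $r$-th entry in some row $\ge r$, the count forces that entry to be in row $r$ exactly. Hence each reverse jog of length $\ell$ occupies rows $1, 2, \ldots, \ell$, which is the layered condition on $Q(p)$. The symmetric argument with $p$ and $p^{-1}$ swapped shows that $P(p) = Q(p^{-1})$ is layered.

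The main technical step is the inductive squeeze in $(B) \Rightarrow (A)$, where the conjugate-partition identity combines with the strict-row-increase property within a reverse jog to pin down the exact rows used by each jog, in close parallel with the column-based induction in the proof of Theorem~\ref{characterization}.
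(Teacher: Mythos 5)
Your proof is correct. Note first that the paper itself offers no written proof of Corollary~\ref{genlay}: it is stated as the evident dual of Theorem~\ref{general}, to be obtained by running that proof with ``layered'' in place of ``transposed layer'', reverse jogs in place of jogs, and $k$-decreasing in place of $k$-increasing --- which is exactly the plan you announce and carry out, so in spirit you follow the intended route. Where you differ is in the internal mechanics, and your choices are sound. For (A)$\Rightarrow$(B), your ``streak'' decomposition of a layered tableau identifies the multiset of reverse-jog lengths of $p^{-1}$ (via Proposition~\ref{descent} and Remark~\ref{inverse}) directly with the column lengths, so tightness falls out of Theorem~\ref{gfk} by a straight computation; the paper's corresponding step in Theorem~\ref{general} is instead a proof by contradiction comparing $a_k$ with $\ell_k$. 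For (B)$\Rightarrow$(A), the paper (following the proof of Theorem~\ref{characterization}) argues via ``no two entries of a jog share a column'' plus a rearrangement of entries within columns, whereas you use the strict row increase within a reverse jog together with a counting induction based on the conjugate-partition identity $a_r=\#\{i: b_i\ge r\}$; this pins each jog of length $\ell$ into rows $1,\ldots,\ell$ and yields the layered condition cleanly (the final passage from ``$j$th entry of each jog sits in row $j$'' to the condition on consecutive entries $i,i+1$ deserves one explicit sentence, since $i+1$ either continues $i$'s jog, landing one row lower, or starts a new jog in the top row). Two very small gaps you may wish to close explicitly: that no two entries of row $r$ lie in the same reverse jog (immediate from the strict row increase, and needed for your count $a_r$), and that matching partial sums for all $k$ really force the reverse-jog length multiset to equal $\{b_i\}$, including that the number of jogs equals the number of columns; both are routine, and the paper is equally terse at the analogous points.
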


It is natural to ask how many permutations are of the kind that is described by Theorem \ref{general}, or, equivalently, 
by Corollary \ref{genlay}. It is easier (in terms of terminology) to discuss the answer for the latter. The first question we must answer is how many 
layered SYT of a given shape are there? As layered SYT with column lengths $b_1,b_2, \cdots, b_k$ are in bijection
with layered permutations of layer lengths $b_1,b_2, \cdots, b_k$, it follows that the number of such SYT is equal to 
the number of distinct multiset-permutations of the multiset $\{b_1,b_2,\cdots ,b_k\}$.

In order to announce our formula, we need one definition. For a partition $h$ of the integer $n$, let 
$\hbox{comp}(h)$ denote the total number of compositions of $n$ that are obtained by rearranging the parts of $h$. 
For instance, if $h=3+2+1$, then $\hbox{comp}(h)=6$, while if $h=2+2+1+1+1$, then  $\hbox{comp}(h)=10$.

So, there are $\hbox{comp}(h)^2$ pairs of layered SYT of shape $h$, and we proved the following theorem. 
\begin{theorem}
The number of permutations $p$ of length $n$ so that both $p$ and $p^{-1}$ are dually GFK-tight (equivalently, GFK-tight),
is 
\[A_n=\sum_h \hbox{comp}(h)^2,\]
where $h$ ranges all partitions of the integer $n$. 
\end{theorem}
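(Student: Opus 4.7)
The plan is to translate the counting problem through the RS correspondence and then invoke the bijective description of layered SYT that was essentially set up in the paragraph preceding the theorem. By Corollary \ref{genlay}, the set being counted is exactly the set of permutations $p$ for which both $P(p)$ and $Q(p)$ are layered SYT. Under the RS bijection, permutations of length $n$ are in bijection with pairs $(P,Q)$ of SYT of the same shape on $n$ boxes, so I want to count pairs $(P,Q)$ of layered SYT of a common shape $h\vdash n$, then sum over all partitions $h$ of $n$.

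The second step is to count, for a fixed partition $h$ of $n$, the number of layered SYT of shape $h$. This is where I would cite Corollary \ref{layercor}: for a layered permutation $w$ with layer lengths $(b_1,\ldots,b_k)$ listed from left to right, the column lengths of the tableau of $w$ are the same multiset $\{b_1,\ldots,b_k\}$ sorted in decreasing order, i.e.\ they form the partition that $h$ represents. Moreover, distinct compositions $(b_1,\ldots,b_k)$ rearranging the parts of $h$ produce distinct layered permutations and hence distinct layered SYT (since the layered permutation is recoverable from its layer sequence). Conversely, any layered SYT of shape $h$ arises this way, because the characterization in Lemma \ref{layered} forces the reading word of a layered SYT to be a layered permutation whose layer lengths rearrange the parts of $h$. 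Therefore the number of layered SYT of shape $h$ is exactly $\operatorname{comp}(h)$.

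Finally, since $P$ and $Q$ are independently chosen among the $\operatorname{comp}(h)$ layered SYT of shape $h$, the number of pairs of a common shape $h$ is $\operatorname{comp}(h)^2$, and summing over partitions $h\vdash n$ yields
\[
A_n \;=\; \sum_h \operatorname{comp}(h)^2,
\]
as claimed.

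There is no serious obstacle here: once Corollary \ref{genlay} and Corollary \ref{layercor} are in hand, the argument is just an RS-based product-and-sum count. The only point that requires a moment of care is verifying the bijection between layered SYT of shape $h$ and compositions rearranging the parts of $h$, rather than merely the inequality in one direction; this is exactly the content extracted from Corollary \ref{layercor} together with Lemma \ref{layered}, so the proof should be short.
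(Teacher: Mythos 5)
Your proposal is correct and follows essentially the same route as the paper: reduce via Corollary \ref{genlay} and the RS correspondence to counting pairs of layered SYT of a common shape, count layered SYT of shape $h$ as $\hbox{comp}(h)$ using the bijection with layered permutations (Lemma \ref{layered} and Corollary \ref{layercor}), and sum $\hbox{comp}(h)^2$ over partitions of $n$. The only slight imprecision is your phrase about the ``reading word'' of a layered SYT; the bijection with layered permutations is through the RS correspondence (the tableau of the layered permutation), not a reading word, but this does not affect the argument.
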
 

Note that the sequence of the numbers of $A_n$ is in OEIS, as sequence A263897. It is obvious that it also counts
{\em anagram compositions} of $2n$, that is, compositions of $2n$ into $2k$ parts, so that the multiset of the first
$k$ parts is identical to the multiset of the last $k$ parts. It would be interesting to find out how large the numbers 
$A_n$ are. It is clear that their exponential order is 4, since, on the one hand, 
\[A_n=\sum_h \hbox{comp}(h)^2 \leq \left ( \sum_h \hbox{comp}(h) \right )^2 =4^{n-1},\]
and on the other hand, by the Cauchy-Schwarz inequality, 
\[A_n=\sum_h \hbox{comp}(h)^2 \geq \frac{1}{p(n)} \left (\sum_h \hbox{comp}(h) \right )^2 =\frac{4^{n-1}}{p(n)},\]
where $p(n)$ denotes the number of partitions of $n$. As it is well known that $p(n)$ is of exponential order 1, 
our claim is proved. 

As dually GFK-tight involutions have a very simple characterization (they are the layered permutations), it is natural to 
ask if GFK-tight involutions, as well as permutations described in Theorem \ref{general} and Corollary \ref{genlay} can
be described in a simpler way.

\bibliographystyle{abbrv}
\bibliography{invol-bib}

\begin{thebibliography}{10}

\bibitem{beissinger}
J.~S. Beissinger.
\newblock Similar constructions for young tableaux and involutions, and their
  application to shiftable tableaux.
\newblock {\em Discrete Math}, 67(2):149--163, 1987.

\bibitem{combperm}
M.~B\'ona.
\newblock {\em Combinatorics of Permutations}.
\newblock Discrete Mathematics and Its Applications. CRC Press -- Chapman Hall,
  second edition, 2012.

\bibitem{fomin}
S.~Fomin.
\newblock Finite partially ordered sets of young diagrams. (russian).
\newblock {\em Dokl. Akad. Nauk SSSR}, 243(5):1144--1147, 1978.

\bibitem{greene-kleitman}
C.~Greene and D.~J. Kleitman.
\newblock The structure of sperner $k$-families.
\newblock {\em J. Combinatorial Theory Ser. A}, 20:41--68, 1976.

\bibitem{robinson:on-representations:}
G.~Robinson.
\newblock On representations of the symmetric group.
\newblock {\em Amer. J. Math.}, 60:745--760, 1938.

\bibitem{sagan}
B.~Sagan.
\newblock {\em The Symmetric Group}.
\newblock Springer, second edition, 2001.

\bibitem{schensted:longest-increas:}
C.~Schensted.
\newblock Longest increasing and decreasing subsequences.
\newblock {\em Canad. J. Math.}, 13:179--191, 1961.

\bibitem{schutz}
M.-P. Schutzenberger.
\newblock Quelques remarques sur une construction de schensted.
\newblock {\em Math. Scand.}, 12:117--128, 1963.

\bibitem{dan}
D.~Warren.
\newblock Optimal packing behavior of some 2-block patterns.
\newblock {\em Ann. Comb}, 8(3):355--367, 2004.

\bibitem{dan1}
D.~Warren.
\newblock Packing densities of more 2-block patterns.
\newblock {\em Adv. in Appl. Math.}, 36(2):202--211, 2006.

\end{thebibliography}

\end{document}